\newcommand{\RR}{\mathds{R}}
\newcommand{\ex}{\:\exists\:}
\newcommand{\mnorm}[1]{\left\lVert#1\right\rVert}
\newcommand{\setn}[1]{\left\{#1\right\}}
\newcommand{\setcond}[2]{\left\{#1 \:\middle\vert\: #2\right\}}
\newcommand{\setconds}[2]{\{#1 \:\vert\: #2\}}
\newcommand{\defeq}{\mathrel{\mathop:}=}
\newcommand{\dah}{i.e., }
\newcommand{\zB}{e.g., }
\newcommand{\strline}[2]{\left\langle#1,#2\right\rangle}
\newcommand{\clray}[2]{[#1,#2\rangle}
\theoremstyle{plain}
\newtheorem{Satz}{Theorem}[section]
\newtheorem{Kor}[Satz]{Corollary}
\newtheorem{Prop}[Satz]{Proposition}
\theoremstyle{definition}
\newtheorem{Bem}[Satz]{Remark}
\newtheorem*{msc}{2010 Mathematics Subject Classification}
\newtheorem*{keys}{Keywords}
\theoremstyle{remark}
\DeclareMathOperator{\co}{conv}
\DeclareMathOperator{\bd}{bd}
\DeclareMathOperator{\bisec}{bis}
\let \eps \varepsilon
\begin{document}
\parindent 0pt
\title{On Bisectors in Normed Planes\footnote{accepted for publication in Contributions to Discrete Mathematics, published by the University of Calgary}}
\author{Thomas Jahn \and Margarita Spirova}
\date{}
\maketitle

\begin{abstract}
In this note, we completely describe the shape of the bisector of two given points in a two-dimensional normed vector space. More precisely, we show that, depending on the position of two given points with respect to the shape of the unit circle, the following holds: the bisector of a non-strict pair of points consists of two cones and a curve, which has properties similar to those of bisectors of strict pairs of points.
\end{abstract}

\begin{keys}
bisector, normed plane, norming functional
\end{keys}
\begin{msc}
46B20, 52A10
\end{msc}

\section{Introduction} The set of all points having equal distances from two given points is called their \emph{bisector} (or \emph{equidistance set}). The earliest contributions to this notion in arbitrary metric spaces were given by Mann \cite{Mann1935} and Busemann \cite{Busemann1941}, \cite{Busemann1944}. However, a deeper study of bisectors started only with the development of Computational Geometry, since geometric properties of bisectors are of decisive importance for the construction of Voronoi diagrams related to convex distance functions; see, \zB the surveys \cite{Aurenhammer1991}, \cite{AurenhammerKl2000}, and the papers \cite{ChewDr1985}, \cite{CorbalanMaReSa1993}, \cite{CorbalanMaRe1996}. The most contributions refer to strictly convex distance functions or to the so-called \emph{nice metrics} (for the definition of nice metrics see,  \zB \cite{Klein1989}). Recently, various results on bisectors were also obtained in Minkowski geometry (see \cite{Horvath2000}, \cite{Horvath2004}, \cite{HorvathMa2013}, \cite{HeMaWu2013}, and \cite{Vaeisaelae2013}). It is our aim to give a complete geometric description of bisectors based on distances which are induced by arbitrary norms.

Let $X=(\RR^2,\mnorm{\cdot})$ be a (\emph{normed} or) \emph{Minkowski plane}, \dah a two-dimensional normed vector space with unit ball $B$ which is a compact, convex set centered at its interior point $o$, the origin of $X$. If $X^\ast$ is the dual space of $X$, then a norm on $X^\ast$ is defined as $\mnorm{\phi}=\max_{\mnorm{x}=1}\phi(x)$. A \emph{norming functional of } $x$ is a $\phi\in X^\ast$ such that $\mnorm{\phi}=1$ and $\phi(x)=\mnorm{x}$, \dah the line $\phi^{-1}(1)=\setcond{y\in X}{\phi(y)=1}$ supports $B$ at $x$.

For two distinct points $x,y\in \RR^2$, we shall write
\begin{align*}
\strline{x}{y}&=\setcond{\lambda y+(1-\lambda)x}{\lambda\in \RR},\\
\clray{x}{y}&=\setcond{\lambda y+(1-\lambda)x}{\lambda\geq 0},\\
\clray{x}{y}_-&=\setcond{\lambda y+(1-\lambda)x}{\lambda\leq 0}
\end{align*}
for the straight line through $x$ and $y$, the ray starting at $x$ and passing through $y$, and its opposite ray, respectively.

The set of points with equal distances to two given points $p$ and $q$ is called their \emph{bisector} and denoted by $\bisec(p,q)$. The shape of the bisector depends on the shape of the unit ball $B$ of $X$ and the relative position of the two given points. Following \cite{Vaeisaelae2013}, we call the pair $(p,q)$ \emph{strict} if the unit circle does not contain any line segment parallel to $p-q$. Otherwise the pair $(p,q)$ is called \emph{non-strict}. In the strict case, the bisector $\bisec(p,q)$ of $p$ and $q$ shares the following properties (see, \zB \cite[\textsection~3.2]{MartiniSwWe2001} and \cite[\textsection~4.1]{MartiniSw2004}):

\begin{enumerate}[label={(\alph*)}, align=left,series=behaviour]
\item{Every straight line parallel to $p-q$ intersects $\bisec(p,q)$ exactly once.\label{first}}
\item{The set $\bisec(p,q)$ is homeomorphic to a straight line.\label{second}}
\item{For every point $x\in\bisec(p,q)$, the bisector is contained in the double cone with apex $x$ through $p$ and $q$.\label{last}}
\end{enumerate}

We will show that in the non-strict case the bisector is the union of two closed cones and a connected set $B_1$ that joins the apices of these cones. The properties of set $B_1$ are very similar to \ref{first}--\ref{last}:

\begin{enumerate}[label={(\alph*')}, align=left,series=behaviour-prime]
\item{Every straight line parallel to $p-q$ intersects $B_1$ in at most one point.\label{first-prime}}
\item{$B_1$ is homeomorphic to the unit interval $[0,1]$.\label{second-prime}}
\item{For every point $x\in B_1$, the set $B_1$ is contained in the double cone with apex $x$ through $p$ and $q$.\label{last-prime}}
\end{enumerate}
Scaling the unit disc by a factor $\eps$ yields a scaling of the Minkowski functional by $\eps^{-1}$. Hence the family of bisectors of pairs of distinct points is only determined by the shape of the unit ball and not by its size. This leads to the following construction of the bisector \cite[p. 18]{Ma2000}. Shrink the unit ball such that $(p+B)\cap (q+B)=\emptyset$. Choose $v_p\in p+\bd(B)$ and $v_q\in q+\bd(B)$ such that $\strline{v_p}{v_q}$ is a translate of $\strline{p}{q}$ and such that $\clray{p}{v_p}$ and $\clray{q}{v_q}$ have exactly one intersection point $z$; see Figure~\ref{fig:1}. The intercept theorem says
\begin{equation*}
\frac{\mnorm{z-p}}{\mnorm{v_p-p}}=\frac{\mnorm{z-q}}{\mnorm{v_q-q}},
\end{equation*}
\dah $z\in\bisec(p,q)$. Conversely, for $z\in \bisec(p,q)$ set $\setn{v_p}\defeq \clray{p}{z}\cap (p+\bd(B))$ and $\setn{v_q}\defeq \clray{q}{z}\cap (q+\bd(B))$. By the intercept theorem, $\strline{v_p}{v_q}$ is a translate of $\strline{p}{q}$. This construction works only for points in $\bisec(p,q)\setminus\strline{p}{q}$. Obviously, $\bisec(p,q)\cap \strline{p}{q}=\setn{\frac{1}{2}(p+q)}$.

\begin{figure}[h!]
\begin{center}
\begin{tikzpicture}[line cap=round,line join=round,>=triangle 45,x=1.2cm,y=1.2cm]
\draw[shift={(-1.5,0)}] (-1,0)--(0,1)--(1,1)--(1,0)--(0,-1)--(-1,-1)--cycle;
\draw[shift={(1.5,0)}] (-1,0)--(0,1)--(1,1)--(1,0)--(0,-1)--(-1,-1)--cycle;
\draw[dotted] (-2.5,0)--(2.5,0);
\draw[dotted] (-2.5,0.5)--(2.5,0.5);
\fill [color=black] (-1.5,0) circle(1.5pt) node[below]{$p$};
\fill [color=black] (-0.5,0.5) circle(1.5pt) node[above left]{$v_p$};
\fill [color=black] (1.5,0) circle(1.5pt) node[below]{$q$};
\fill [color=black] (1,0.5) circle(1.5pt) node[above]{$v_q$};
\draw[name path=ray1] (-1.5,0)--($(-1.5,0)!2.5!(-0.5,0.5)$);
\draw[name path=ray2] (1.5,0)--($(1.5,0)!2.5!(1,0.5)$);
\fill[color=black,name intersections={of=ray1 and ray2}](intersection-1)circle(1.5pt)node[above]{$z$};
\end{tikzpicture}
\end{center}
\caption{}\label{fig:1}
\end{figure}

\section{The main result and its proof}\label{chap:main}
Let $(p,q)$ be a non-strict pair in the normed plane $(\RR^2,\mnorm{\cdot})$. Without loss of generality, assume that $\strline{p}{q}$ is a horizontal line and $p$ lies left to $q$. Since $p+B$ and $q+B$ are translates of each other, there are two common supporting lines: $L_{\text{top}}$ above $\strline{p}{q}$ and $L_{\text{bottom}}$ below $\strline{p}{q}$; see Figure~\ref{fig:2}. Let
\begin{align}
L_{\text{top}}\cap(\bd(B)+p)&=[t_p,t_p^\prime],\notag\\
L_{\text{top}}\cap(\bd(B)+q)&=[t_q^\prime,t_q],\notag\\
L_{\text{bottom}}\cap(\bd(B)+p)&=[b_p,b_p^\prime],\notag\\
L_{\text{bottom}}\cap(\bd(B)+q)&=[b_q^\prime,b_q],\label{eq:top_bottom}
\end{align}
such that the alignment of the points on $L_{\text{top}}$ and $L_{\text{bottom}}$ is as depicted in Figure~\ref{fig:2}.

\begin{figure}[h!]
\begin{center}
\begin{tikzpicture}[line cap=round,line join=round,>=triangle 45,x=1.0cm,y=1.0cm]
\draw[shift={(-1.5,0)}] (-1,0)--(0,1)--(1,1)--(1,0)--(0,-1)--(-1,-1)--cycle;
\draw[shift={(1.5,0)}] (-1,0)--(0,1)--(1,1)--(1,0)--(0,-1)--(-1,-1)--cycle;
\draw (-2.5,1)--(2.5,1);
\draw[dotted] (-2.5,0)--(2.5,0);
\draw (-2.5,-1)--(2.5,-1);
\fill [color=black] (-1.5,0) circle(1.5pt) node[below]{$p$};
\fill [color=black] (1.5,0) circle(1.5pt) node[below]{$q$};
\fill [color=black] (-1.5,1) circle(1.5pt) node[above]{$t_p$};
\fill [color=black] (-0.5,1) circle(1.5pt) node[above]{$t_p^\prime$};
\fill [color=black] (1.5,1) circle(1.5pt) node[above]{$t_q^\prime$};
\fill [color=black] (2.5,1) circle(1.5pt) node[above]{$t_q$};
\fill [color=black] (-1.5,-1) circle(1.5pt) node[below]{$b_p^\prime$};
\fill [color=black] (-2.5,-1) circle(1.5pt) node[below]{$b_p$};
\fill [color=black] (1.5,-1) circle(1.5pt) node[below]{$b_q$};
\fill [color=black] (0.5,-1) circle(1.5pt) node[below]{$b_q^\prime$};
\draw (2.6,1) node[right]{$L_{\text{top}}$};
\draw (2.6,-1) node[right]{$L_{\text{bottom}}$};
\end{tikzpicture}
\end{center}
\caption{}\label{fig:2}
\end{figure}
In the following considerations, two points will play an important role: the intersection point $s_t$ of the rays $\clray{p}{t_p^\prime}$ and $\clray{q}{t_q^\prime}$, and the intersection point $s_b$ of the rays $\clray{p}{b_p^\prime}$ and $\clray{q}{b_q^\prime}$.

The first part of our theorem is a known result (see, \zB \cite[Proposition~22]{MartiniSw2004}), but we will give a short proof of it in the context of the remaining part of the theorem. We also refer to \cite[Lemma~2.1.1.1]{Ma2000} for this result, but concerning strict pair of points.

\begin{Satz}\label{thm:parallel}
Let $(p,q)$ be a non-strict pair in the normed plane $(\RR^2,\mnorm{\cdot})$. Then the bisector $\bisec(p,q)$ has interior points and is fully contained in the interior of the bent strip $\co(\clray{p}{t_p}\cup\clray{q}{t_q})\cup \co(\clray{p}{b_p}\cup\clray{q}{b_q})$. More precisely, we have $\bisec(p,q)=B_1\cup B_2\cup B_3$, where
\begin{itemize}
\item{there is a homeomorphism $f:[0,1]\to B_1$ with $f(0)=s_b$ and $f(1)=s_t$,}
\item{$B_2=\co(\clray{s_t}{p}_-\cup\clray{s_t}{q}_-)$,}
\item{$B_3=\co(\clray{s_b}{p}_-\cup\clray{s_b}{q}_-)$.}
\end{itemize}
\end{Satz}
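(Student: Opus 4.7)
My proof will use the intercept-theorem construction recalled in the introduction: $z\in\bisec(p,q)\setminus\strline{p}{q}$ if and only if the points defined by $\setn{v_p}=\clray{p}{z}\cap(p+\bd(B))$ and $\setn{v_q}=\clray{q}{z}\cap(q+\bd(B))$ lie on a common line parallel to $\strline{p}{q}$. I will stratify $\bisec(p,q)$ by the common height $h$ of $v_p$ and $v_q$: the extreme values $h=h_t$ (the height of $L_{\text{top}}$) and $h=-h_b$ (the height of $L_{\text{bottom}}$) give the two full-dimensional pieces $B_2$ and $B_3$, while intermediate values $h\in(-h_b,h_t)$ parameterise the curve $B_1$.

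\textbf{The cones.} For $B_2\subseteq\bisec(p,q)$, a direct check using that $\clray{s_t}{p}_-$ extends $\clray{p}{t_p^\prime}$ past $s_t$ and $\clray{s_t}{q}_-$ extends $\clray{q}{t_q^\prime}$ past $s_t$ shows that $B_2$ consists precisely of those $z$ for which $z-p$ is a non-negative combination of $t_p-p$, $t_p^\prime-p$ and, simultaneously, $z-q$ is a non-negative combination of $t_q^\prime-q$, $t_q-q$. Equivalently, $v_p\in[t_p,t_p^\prime]$ and $v_q\in[t_q^\prime,t_q]$, both on $L_{\text{top}}$, and the intercept theorem delivers $z\in\bisec(p,q)$. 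The case of $B_3$ is symmetric, and the non-empty interior of $B_2$ already proves the first assertion of the theorem.

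\textbf{The curve and exhaustion.} For each $h\in(-h_b,h_t)\setminus\setn{0}$ let $v_p(h)$ be the intersection of the horizontal line of height $h$ with the arc of $p+\bd(B)$ facing $q$, and define $v_q(h)$ analogously on $q+\bd(B)$. The rays $\clray{p}{v_p(h)}$ and $\clray{q}{v_q(h)}$ meet at a single point $z(h)\in\bisec(p,q)$; I extend by $z(h_t)\defeq s_t$, $z(-h_b)\defeq s_b$, and $z(0)\defeq\tfrac12(p+q)$. The resulting map $[-h_b,h_t]\to\RR^2$ is continuous (at $h=0$ one uses the central symmetry $B=-B$, so that $v_p(h)$ and $v_q(h)$ converge symmetrically to the horizontal extremes of $p+B$ and $q+B$) and injective (since $z$ determines $(v_p,v_q)$ via the intercept construction, and $(v_p,v_q)$ determines $h$); $B_1$ is its image, and an affine reparameterisation yields the homeomorphism $f$. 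For exhaustion, any $z\in\bisec(p,q)\setminus\strline{p}{q}$ has $v_p,v_q$ at some common height $h\in[-h_b,h_t]$, and the cases $h=h_t$, $h=-h_b$, $h\in(-h_b,h_t)$ put $z$ into $B_2$, $B_3$, $B_1$ respectively, while $\bisec(p,q)\cap\strline{p}{q}=\setn{\tfrac12(p+q)}\subset B_1$.

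\textbf{Bent strip and main obstacle.} Containment of $\bisec(p,q)$ in the open bent strip amounts to excluding its four outer rays $\clray{p}{t_p}$, $\clray{q}{t_q}$, $\clray{p}{b_p}$, $\clray{q}{b_q}$ from the bisector. For instance, $z\in\clray{p}{t_p}\setminus\setn{p}$ forces $v_p=t_p$, and the intercept identity then yields $v_q-q=(t_p-p)-(q-p)/\mnorm{z-p}$, whose horizontal component is strictly less than $(t_p-p)_x$, incompatible with the requirement $v_q-q\in[t_p-p,t_p^\prime-p]$; the other three rays are handled analogously. The delicate point, I anticipate, is the continuity and injectivity of $z(\cdot)$ at $h=0$; the cleanest route is to derive the explicit identity $z_y(h)=\mnorm{q-p}\cdot h/W(h)$, where $W(h)$ is the horizontal width of $B$ at height $h$, and then to combine the fact that $W$ is even (by $B=-B$) and concave (by convexity of $B$) to conclude that $h\mapsto z_y(h)$ is strictly increasing, which bars any self-intersection of the curve and secures continuity through the critical height $0$.
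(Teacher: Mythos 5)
Your proposal is correct, but it takes a genuinely different route from the paper's proof. The paper reduces to the upper half-plane by central symmetry, splits the bisector there according to the common radius $\lambda=\mnorm{z-p}=\mnorm{z-q}$ into a curve part ($\lambda\leq\mnorm{s_t-p}$) and a cone part ($\lambda\geq\mnorm{s_t-p}$), imports the homeomorphism of the curve part onto an arc of $\bd(B)+p$ from Ma's thesis, and identifies the cone part with $C(p,\phi)\cap C(q,\phi)$ by means of the norming functional $\phi$ of $\frac{1}{2}(t_p+t_p^\prime)-p$. You instead stratify by the height $h$ of the chord carrying $v_p$ and $v_q$, and this buys three things: the cone $B_2$ is handled with no functional-analytic input (both $v$'s lie on $L_{\text{top}}$, so the intercept theorem applies verbatim); the curve $B_1$ is parameterized globally on $[-h_b,h_t]$ rather than half-by-half, so neither the symmetry reduction nor the external citation is needed; and the identity $z_y(h)=\mnorm{q-p}\cdot h/W(h)$, combined with evenness and concavity of the width function $W$, yields injectivity and continuity through $h=0$ by an explicit monotonicity computation (for $0<h_1<h_2$ one has $h_1/W(h_1)<h_2/W(h_1)\leq h_2/W(h_2)$ because an even concave function is non-increasing on $[0,h_t]$, and oddness of $h/W(h)$ transports this to negative heights). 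Two steps should still be made explicit. First, continuity of $z(\cdot)$ at $h=\pm h_t$: it holds because the right and left boundary abscissae of $B$ at height $h$ are concave, respectively convex, in $h$ and, by closedness of $B$, match the endpoints of the faces $[t_p,t_p^\prime]-p$ and $[t_q^\prime,t_q]-q$ in the limit, so $W$ is continuous up to $\pm h_t$ and $z(h)\to s_t$. Second, your ray-exclusion argument for the open bent strip tacitly presupposes containment in the closed strip; this does follow from your own exhaustion, since concavity of the right abscissa together with its positivity at height $0$ forces $v_p(h)-p$ to lie strictly to the right of the ray spanned by $t_p-p$, whence every $z(h)$ and every point of the two cones lies in the interior of $\co(\clray{p}{t_p}\cup\clray{q}{t_q})\cup\co(\clray{p}{b_p}\cup\clray{q}{b_q})$ — but a sentence to this effect is needed.
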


\begin{proof}
The bisector $\bisec(p,q)$ is symmetric with respect to the midpoint $\frac{1}{2}(p+q)$. Hence it suffices to have a look at that part $B_t$ of $\bisec(p,q)$ which lies in the closed half-plane above $\strline{p}{q}$. First we show that $B_t$ is contained in the mentioned strip. Let $z\in B_t$; see Figure~\ref{fig:3}. If $z\in\strline{p}{q}$, \dah $z=\frac{1}{2}(p+q)$, we are done. Otherwise, we set $\setn{v_p}\defeq\clray{p}{z}\cap (\bd(B)+p)$, $\setn{v_q}\defeq\clray{q}{z}\cap (\bd(B)+q)$.
\begin{figure}[h!]
\begin{center}
\begin{tikzpicture}[line cap=round,line join=round,>=triangle 45,x=1.2cm,y=1.2cm]
\draw[shift={(-1.5,0)}] (-1,0)--(0,1)--(1,1)--(1,0)--(0,-1)--(-1,-1)--cycle;
\draw[shift={(1.5,0)}] (-1,0)--(0,1)--(1,1)--(1,0)--(0,-1)--(-1,-1)--cycle;
\draw (-2.5,0)--(2.5,0);
\draw[dotted] (-2.5,0.5)--(2.5,0.5);
\fill [color=black] (-1.5,0) circle(1.5pt) node[below]{$p$};
\fill [color=black] (-2,0.5) circle(1.5pt) node[above]{$\tilde{v}_p$};
\fill [color=black] (-0.5,0.5) circle(1.5pt) node[above left]{$v_p$};
\fill [color=black] (1.5,0) circle(1.5pt) node[below]{$q$};
\fill [color=black] (2.5,0.5) circle(1.5pt) node[above left]{$\tilde{v}_q$};
\fill [color=black] (1,0.5) circle(1.5pt) node[above]{$v_q$};
\draw[name path=ray1] (-1.5,0)--($(-1.5,0)!2.5!(-0.5,0.5)$);
\draw[name path=ray2] (1.5,0)--($(1.5,0)!2.5!(1,0.5)$);
\fill[color=black,name intersections={of=ray1 and ray2}](intersection-1)circle(1.5pt)node[above]{$z$};
\end{tikzpicture}
\end{center}
\caption{}\label{fig:3}
\end{figure}

By the intercept theorem, $\strline{v_p}{v_q}$ is a translate of $\strline{p}{q}$, and the rays $\clray{p}{v_p}$, $\clray{q}{v_q}$ intersect (namely in $z$). This would not be possible if $z$ were above $\strline{p}{q}$ but outside the interior of $\co(\clray{p}{t_p}\cup\clray{q}{t_q})$. Another consequence of the intercept theorem is $s_t\in B_t$. We will show that $B_t=B_t^1\cup B_2$, where $B_t^1$ is a curve with endpoints $\frac{1}{2}(p+q)$ and $s_b$ which is homeomorphic to the closed unit interval $[0,1]$. Let $R_1\defeq \co(\clray{p}{t_p}\cup\clray{q}{t_q})\setminus\strline{p}{q}$ be the upper part of the relevant strip; see Figure~\ref{fig:4}.

\begin{figure}[H]
\begin{center}
\begin{tikzpicture}[line cap=round,line join=round,>=triangle 45,x=1.0cm,y=1.0cm]
\fill[color=black,fill=black,fill opacity=0.1] (-1.5,4) -- (-1.5,0) -- (1.5,0) -- (5.5,4) -- cycle;
\draw[shift={(-1.5,0)}] (-1,0)--(0,1)--(1,1)--(1,0)--(0,-1)--(-1,-1)--cycle;
\draw[shift={(1.5,0)}] (-1,0)--(0,1)--(1,1)--(1,0)--(0,-1)--(-1,-1)--cycle;
\fill [color=black] (-1.5,0) circle(1.5pt) node[left]{$p$};
\fill [color=black] (1.5,0) circle(1.5pt) node[right]{$q$};
\fill [color=black] (-1.5,1) circle(1.5pt) node[above left]{$t_p$};
\fill [color=black] (-0.5,1) circle(1.5pt) node[above]{$t_p^\prime$};
\fill [color=black] (1.5,1) circle(1.5pt) node[above left]{$t_q^\prime$};
\fill [color=black] (2.5,1) circle(1.5pt) node[above]{$t_q$};
\fill [color=black] (-1.5,-1) circle(1.5pt) node[below right]{$b_p^\prime$};
\fill [color=black] (-2.5,-1) circle(1.5pt) node[below]{$b_p$};
\fill [color=black] (1.5,-1) circle(1.5pt) node[below right]{$b_q$};
\fill [color=black] (0.5,-1) circle(1.5pt) node[below]{$b_q^\prime$};
\draw[color=black] (3,3) node{$R_1$};
\draw (-1.5,0)--(1.5,0);
\draw[name path=strline1] ($(-0.5,1)!1.2!(-2.5,-1)$)--($(-2.5,-1)!2.5!(-0.5,1)$);
\draw ($(-1.5,1)!1.2!(-1.5,-1)$)--($(-1.5,-1)!2.5!(-1.5,1)$);
\draw ($(2.5,1)!1.2!(0.5,-1)$)--($(0.5,-1)!2.5!(2.5,1)$);
\draw[name path=strline2] ($(1.5,1)!1.2!(1.5,-1)$)--($(1.5,-1)!2.5!(1.5,1)$);
\fill[color=black,name intersections={of=strline1 and strline2}](intersection-1)circle(1.5pt)node[right]{$s_t$};
\end{tikzpicture}
\end{center}
\caption{}\label{fig:4}
\end{figure}
Let
\begin{align*}
B_t^1&=\bigcup_{\lambda\in (\frac{1}{2}\mnorm{p-q},\mnorm{s_t-p}]}\setcond{z\in R_1}{\mnorm{z-p}=\mnorm{z-q}=\lambda},\\
B_t^2&=\bigcup_{\lambda\in [\mnorm{s_t-p},\infty)}\setcond{z\in R_1}{\mnorm{z-p}=\mnorm{z-q}=\lambda}.
\end{align*}
Obviously, $B_t=B_t^1\cup B_t^2$ and $B_t^1\cap B_t^2=\setn{s_t}$. Like in the proof of \cite[Lemma~2.1.1.1]{Ma2000}, one can show that there is a homeomorphism from $B_t^1$ to the closed arc of $\bd(B)+p$ between $t_p^\prime$ and $p+\frac{q-p}{\mnorm{q-p}}$ that does not contain $t_p$. Let $G_t$ be the translate of $\strline{p}{q}$ through $s_t$. Clearly, $B_t^2$ lies in the closed half-plane $R_2$ above $G_t$, \dah $B_t^2=B_t\cap R_2$. We will show that $B_2=B_t^2$.
\begin{figure}[h!]
\begin{center}
\begin{tikzpicture}[line cap=round,line join=round,>=triangle 45,x=1.0cm,y=1.0cm]
\clip (-2.6,-0.3) rectangle(6.6,4.9);
\fill[color=black,fill=black,fill opacity=0.1] (-1.5,5) -- (-1.5,0)--(3.5,5) -- cycle;
\fill[color=black,fill=black,fill opacity=0.1] (3.5,5) -- (1.5,3) -- (1.5,5) -- cycle;
\fill[color=black,fill=black,fill opacity=0.1](1.5,5) --(1.5,0)--(6.5,5)-- cycle;
\draw[shift={(-1.5,0)}] (-1,0)--(0,1)--(1,1)--(1,0)--(0,-1)--(-1,-1)--cycle;
\draw[shift={(1.5,0)}] (-1,0)--(0,1)--(1,1)--(1,0)--(0,-1)--(-1,-1)--cycle;
\draw[color=black] (4,4) node{$C(q,\phi)$};
\draw[color=black] (0,4) node{$C(p,\phi)$};
\draw[color=black] (2,4) node{$B_2$};
\fill [color=black] (-1.5,0) circle(1.5pt) node[left]{$p$};
\fill [color=black] (1.5,0) circle(1.5pt) node[right]{$q$};
\fill [color=black] (-1.5,1) circle(1.5pt) node[above left]{$t_p$};
\fill [color=black] (-0.5,1) circle(1.5pt) node[above]{$t_p^\prime$};
\fill [color=black] (1.5,1) circle(1.5pt) node[above left]{$t_q^\prime$};
\fill [color=black] (2.5,1) circle(1.5pt) node[above]{$t_q$};
\draw (-1.5,0)--(1.5,0);
\draw (-2,3)--(6,3);
\draw[name path=strline1] ($(-0.5,1)!1.2!(-2.5,-1)$)--($(-2.5,-1)!3!(-0.5,1)$);
\draw ($(-1.5,1)!1.2!(-1.5,-1)$)--($(-1.5,-1)!3!(-1.5,1)$);
\draw ($(2.5,1)!1.2!(0.5,-1)$)--($(0.5,-1)!3!(2.5,1)$);
\draw[name path=strline2] ($(1.5,1)!1.2!(1.5,-1)$)--($(1.5,-1)!3!(1.5,1)$);
\fill[color=black,name intersections={of=strline1 and strline2}](intersection-1)circle(1.5pt)node[below right]{$s_t$};
\draw (6,3) node[below]{$G_t$};
\end{tikzpicture}
\end{center}
\caption{}\label{fig:5}
\end{figure}
The inclusion $B_2\subset R_2$ is evident. Next, we show $B_2\subset\bisec(p,q)$. Due to the Hahn--Banach theorem there is a unique norming functional $\phi$ of $\frac{1}{2}(t_p+t_p^\prime)-p$. The level sets of the functional $\phi$ are straight lines parallel to $p-q$, and $\phi$ takes the value $1$ at the points of $L_{\text{top}}-p$. Let us define the cone $C(x,\phi)=x+\setconds{a\in \RR^2}{\phi(a)=\mnorm{a}}$, \dah $C(x,\phi)$ is the translate by $x$ of the union of rays from the origin through the exposed face $\phi^{-1}(1)\cap B$ of the unit ball $B$ of $(\RR^2,\mnorm{\cdot})$, see \cite[Definition~3.4]{MartiniSwWe2002} and Figure~\ref{fig:5}. Then it is easy to see that $B_2=C(p,\phi)\cap C(q,\phi)$. We have
\begin{align*}
z\in C(p,\phi)\cap C(q,\phi)&\Longleftrightarrow \ex a,b\in X: \begin{cases}\phi(a)=\mnorm{a},\phi(b)=\mnorm{b},\\z-p=a, z-q=b\end{cases}\\
&\Longrightarrow \begin{cases}\phi(z-p)=\phi(a)=\mnorm{a}=\mnorm{z-p},\\\phi(z-q)=\phi(b)=\mnorm{b}=\mnorm{z-q}\end{cases}\\
&\stackrel{\star}{\Longrightarrow} \mnorm{z-p}= \phi(z)-\phi(p)=\phi(z)-\phi(q)=\mnorm{z-q}\\
&\Longrightarrow z\in\bisec(p,q).
\end{align*}
This shows the inclusion $B_2\subset R_2\cap \bisec(p,q)=B_t^2$. In step $\stackrel{\star}{\Longrightarrow}$, we used $\phi(p)-\phi(q)=\phi(p-q)=0$ which follows from the choice of $\phi$. Conversely, if $z\in B_t^2$, then $z\in C(p,\phi)\cup C(q,\phi)$, that is, $\phi$ is a norming functional of $z-p$ or of $z-q$. The equality $\phi(z-p)=\phi(z-q)$ is true by choice of $\phi$. If $\phi$ is a norming functional of $z-p$, say, but not of $z-q$, then $\mnorm{z-p}=\phi(z-p)=\phi(z-q)\neq\mnorm{z-q}$, which contradicts $z\in\bisec(p,q)$.
\end{proof}
Note that the second part of this proof can be simplified using the fact that the bisector is convex in the direction of $p-q$ (see \cite[Lemmas 1 and 2]{Horvath2000}).

Using the notation from Theorem~\ref{thm:parallel} and its proof we rewrite this theorem in the following form.

\begin{Kor}
Let $(p,q)$ be a non-strict pair in the normed plane $(\RR^2,\mnorm{\cdot})$, and $\phi$ be a (unique) norming functional of $\frac{1}{2}(t_p+t_p^\prime)-p$. Then $\bisec(p,q)=B_1\cup B_2\cup B_3$, where
\begin{itemize}
\item{there is a homeomorphism $f:[0,1]\to B_1$ with $f(0)=s_b$ and $f(1)=s_t$,}
\item{$B_2=C(p,\phi)\cap C(q,\phi)$,}
\item{$B_3=C(p, -\phi)\cap C(q, -\phi)$.}
\end{itemize}
\end{Kor}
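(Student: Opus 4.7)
The corollary is essentially a reformulation of Theorem~\ref{thm:parallel} in the cone notation $C(x,\phi)$. The plan is therefore not to redo the hard work, but to identify each of the three pieces in the new description with the corresponding piece in the theorem.

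For $B_1$, nothing is to be done: the homeomorphism $f\colon[0,1]\to B_1$ with $f(0)=s_b$ and $f(1)=s_t$ is exactly the one produced in the theorem (obtained by gluing the curve $B_t^1$, its reflection through $\tfrac{1}{2}(p+q)$, and the segment joining them at the midpoint, or more cleanly by parametrising via the levels $\mnorm{z-p}=\mnorm{z-q}=\lambda$ between $\tfrac{1}{2}\mnorm{p-q}$ and $\mnorm{s_t-p}$ and using symmetry).

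For $B_2$, I would just cite the relevant line of the proof of Theorem~\ref{thm:parallel}: there it is shown, using the Hahn--Banach theorem and the cone identity $B_2=C(p,\phi)\cap C(q,\phi)$, that the upper cone described geometrically as $\co(\clray{s_t}{p}_-\cup\clray{s_t}{q}_-)$ coincides with $C(p,\phi)\cap C(q,\phi)$, where $\phi$ is the unique norming functional of $\tfrac{1}{2}(t_p+t_p^\prime)-p$. This identification is precisely the claim for $B_2$.

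For $B_3$, I would use the central symmetry of the unit ball $B$ about $o$ and the symmetry of $\bisec(p,q)$ about $\tfrac{1}{2}(p+q)$, both of which are already invoked in the proof of the theorem. Since $L_{\text{bottom}}$ is the support line of $p+B$ parallel to $L_{\text{top}}$ on the opposite side, the equation of $L_{\text{top}}-p$ with respect to $\phi$ is $\phi=1$ and hence the equation of $L_{\text{bottom}}-p$ is $\phi=-1$, \dah\ $(-\phi)(y-p)=1$ on $L_{\text{bottom}}$. Thus $-\phi$ is the unique norming functional of $\tfrac{1}{2}(b_p+b_p^\prime)-p$. Repeating the argument from the $B_2$-part verbatim with $-\phi$ in place of $\phi$ (equivalently, applying the point reflection at $\tfrac{1}{2}(p+q)$ to the equality $B_2=C(p,\phi)\cap C(q,\phi)$) gives $B_3=C(p,-\phi)\cap C(q,-\phi)$. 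No step requires new work; the only mild obstacle is bookkeeping, namely checking that the sign of $\phi$ is indeed flipped on the lower support line, which is immediate from the central symmetry of $B$.
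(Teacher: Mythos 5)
Your proposal is correct and follows essentially the same route as the paper, which gives no separate proof but simply presents the corollary as a rewriting of Theorem~\ref{thm:parallel}, relying on the identification $B_2=C(p,\phi)\cap C(q,\phi)$ already established inside the theorem's proof and on central symmetry for $B_3$. Your additional bookkeeping (verifying that $-\phi$ is the norming functional of $\frac{1}{2}(b_p+b_p^\prime)-p$, so that the reflected cone is indeed $C(p,-\phi)\cap C(q,-\phi)$) is exactly the implicit step the paper leaves to the reader.
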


Now we show in two propositions that $B_t$ obeys \ref{first-prime} and \ref{last-prime}. Again, the central symmetry of the bisector allows a restriction to the upper part $B_t$.

The first proposition is a local version of \cite[Proposition~15]{MartiniSwWe2001}.

\begin{Prop}\label{pro1}
Every translate $G$ of $\strline{p}{q}$ above $\strline{p}{q}$ and not above $G_t$ intersects $\bisec(p,q)$ in exactly one point.
\end{Prop}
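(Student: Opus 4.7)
By the central symmetry of $\bisec(p,q)$ about $\tfrac{1}{2}(p+q)$ it suffices to consider a translate $G$ of $\strline{p}{q}$ strictly above $\strline{p}{q}$ at height $h\in(0,h_{G_t}]$, where $h_{G_t}$ denotes the height of $G_t$ above $\strline{p}{q}$, and to show that $\bisec(p,q)\cap G$ is a singleton. Using the decomposition $\bisec(p,q)=B_1\cup B_2\cup B_3$ from Theorem~\ref{thm:parallel}, I first discard $B_3$ (which sits in the closed half-plane below $\strline{p}{q}$ and hence misses $G$) and note that $B_2=\co(\clray{s_t}{p}_-\cup\clray{s_t}{q}_-)$ is the upward cone with apex $s_t\in G_t$. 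Thus $B_2\cap G=\emptyset$ when $G$ is strictly below $G_t$ and $B_2\cap G=\setn{s_t}$ when $G=G_t$. The proof therefore reduces to showing that the upper half $B_t^1$ of $B_1$ meets $G$ in exactly one point (which must coincide with $s_t$ in the boundary case $G=G_t$).

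From the proof of Theorem~\ref{thm:parallel}, $B_t^1$ is homeomorphically parametrized by the arc of $\bd(B)+p$ joining $p+\frac{q-p}{\mnorm{q-p}}$ to $t_p^\prime$. If $v_p$ sits on this arc at height $h^\prime\in[0,h_{\max}]$ above $\strline{p}{q}$ (with $h_{\max}$ the height of $L_{\text{top}}$), then the companion point $v_q$ sits at the same height on the corresponding arc of $\bd(B)+q$, and the intercept-theorem construction from the introduction places the associated bisector point $z(h^\prime)$ at height
\begin{equation*}
h_z(h^\prime)=\frac{d\,h^\prime}{w(h^\prime)},
\end{equation*}
where $d>0$ is the horizontal distance between $p$ and $q$ and $w(h^\prime)$ is the length of the horizontal cross-section of $B$ at height $h^\prime$. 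Since $B_t^1$ is a continuous curve connecting $\tfrac{1}{2}(p+q)$ (at height $0$) to $s_t$ (at height $h_{G_t}$), existence of at least one intersection is immediate.

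The crux is uniqueness, which I derive from strict monotonicity of $h_z$ on $(0,h_{\max}]$. Two ingredients are needed: $w$ is concave on $[h_{\min},h_{\max}]$ (horizontal cross-sections of a convex body vary concavely with the slicing parameter), and $w(0)>0$ (since $o$ is an interior point of $B$). For $0<h_1^\prime<h_2^\prime\leq h_{\max}$, concavity applied at the convex combination $h_1^\prime=(1-h_1^\prime/h_2^\prime)\cdot 0+(h_1^\prime/h_2^\prime)\cdot h_2^\prime$ yields
\begin{equation*}
w(h_1^\prime)\geq(1-h_1^\prime/h_2^\prime)\,w(0)+(h_1^\prime/h_2^\prime)\,w(h_2^\prime),
\end{equation*}
and since $w(0)>0$ and $1-h_1^\prime/h_2^\prime>0$, the strict inequality $h_2^\prime\,w(h_1^\prime)>h_1^\prime\,w(h_2^\prime)$ follows, equivalently $h_z(h_1^\prime)<h_z(h_2^\prime)$. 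Each $h\in(0,h_{G_t}]$ is therefore the height of a unique point of $B_t^1$, which together with the observations about $B_2$ and $B_3$ finishes the proof.

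The main obstacle is establishing strict monotonicity of $h_z$ without any smoothness assumption on $w$ (which is only piecewise linear when $B$ is polygonal); the concavity estimate combined with the strict positivity $w(0)>0$ handles this cleanly, and it is also worth pausing to verify that the construction rules out further bisector points on $G$ coming from configurations with the roles of upper-right and upper-left arcs swapped, which would produce diverging rays that never meet above $\strline{p}{q}$.
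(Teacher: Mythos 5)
Your proof is correct, but it takes a genuinely different route from the paper's. The paper disposes of uniqueness in two lines by contradiction: if $z_1\neq z_2$ were both in $G\cap\bisec(p,q)$, the quadrilateral inequality of \cite[Proposition~7]{MartiniSwWe2001} would have to hold with equality, which forces the segment $[\frac{z_2-p}{\mnorm{z_2-p}},\frac{z_1-q}{\mnorm{z_1-q}}]$ to lie in $\bd(B)$ --- impossible when $G$ is not above $G_t$; existence is left implicit in the structure established by Theorem~\ref{thm:parallel}. You instead make the intercept-theorem construction quantitative: parametrizing $B_t^1$ by the height $h^\prime$ of $v_p$ on the upper-right arc of $\bd(B)+p$, you obtain the explicit height function $h_z(h^\prime)=dh^\prime/w(h^\prime)$ and derive its strict monotonicity from the concavity of the cross-section length $w$ together with $w(0)>0$. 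This is more elementary and self-contained (no appeal to the external inequality), it handles existence explicitly via the intermediate value theorem, and as a by-product it exhibits $B_t^1$ as a graph over the vertical coordinate. Two small points would tighten it: first, when arguing that every point of $B_t^1$ strictly above $\strline{p}{q}$ arises from your configuration, you dismiss only the ``swapped arcs'' case (diverging rays), but you should also dismiss the case where $v_p-p$ and $v_q-q$ are endpoints on the same side of their respective cross-sections, which gives parallel rays and hence no intersection either; second, the division by $w(h_2^\prime)$ at $h_2^\prime=h_{\max}$ needs $w(h_{\max})>0$, i.e.\ that the top face of $B$ is a nondegenerate segment --- this is exactly where the non-strictness of $(p,q)$ enters and is worth saying aloud. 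Neither point is a real gap.
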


\begin{proof}
Assume $G\cap \bisec(p,q)$ contains two distinct points $z_1$, $z_2$ such that $[p,z_1]\cap[z_2,q]=\emptyset$; see Figure~\ref{fig:6}. Now \cite[Proposition~7]{MartiniSwWe2001} yields
\begin{equation}
\mnorm{z_2-p}+\mnorm{z_1-q}\geq \mnorm{z_1-p}+\mnorm{z_2-q}.\label{eq:diagonals_vs_sides2}
\end{equation}
By assumption, equation \eqref{eq:diagonals_vs_sides2} holds with equality, but $[\frac{z_2-p}{\mnorm{z_2-p}},\frac{z_1-q}{\mnorm{z_1-q}}]\centernot\subset\bd(B)$. Hence, the assumption that $G\cap \bisec(p,q)$ contains at least two points is wrong.
\end{proof}
\begin{figure}[h!]
\begin{center}
\begin{tikzpicture}[line cap=round,line join=round,>=triangle 45,x=1.0cm,y=1.0cm]
\clip (-2.6,-0.3) rectangle(6.6,4);
\draw[name path=hex1,shift={(-1.5,0)}] (-1,0)--(0,1)--(1,1)--(1,0)--(0,-1)--(-1,-1)--cycle;
\draw[name path=hex2,shift={(1.5,0)}] (-1,0)--(0,1)--(1,1)--(1,0)--(0,-1)--(-1,-1)--cycle;
\fill [color=black] (-1.5,0) circle(1.5pt) node[left]{$p$};
\fill [color=black] (1.5,0) circle(1.5pt) node[right]{$q$};
\fill [color=black] (1.4,1.8) circle(1.5pt) node[above right]{$z_2$};
\fill [color=black] (0.9,1.8) circle(1.5pt) node[above]{$z_1$};
\draw (-2,1.8)--(6,1.8) node[below]{$G$};
\draw (-2,3)--(6,3) node[below]{$G_t$};
\draw [name path=strline0] (-1.5,0)--(1.4,1.8);
\draw [name path=strline3] (1.5,0)--(0.9,1.8);
\draw[dotted] (-1.5,0)--(1.5,0);
\draw[name path=strline1] ($(-0.5,1)!1.2!(-2.5,-1)$)--($(-2.5,-1)!3!(-0.5,1)$);
\draw ($(-1.5,1)!1.2!(-1.5,-1)$)--($(-1.5,-1)!3!(-1.5,1)$);
\draw ($(2.5,1)!1.2!(0.5,-1)$)--($(0.5,-1)!3!(2.5,1)$);
\draw[name path=strline2] ($(1.5,1)!1.2!(1.5,-1)$)--($(1.5,-1)!3!(1.5,1)$);
\fill[color=black,name intersections={of=strline1 and strline2}](intersection-1)circle(1.5pt)node[above left]{$s_t$};
\fill[color=black,name intersections={of=hex1 and strline0,name=schnitt}](schnitt-1)circle(1.5pt);
\fill[color=black,name intersections={of=hex2 and strline3,name=cut}](cut-1)circle(1.5pt);
\end{tikzpicture}
\end{center}
\caption{}\label{fig:6}
\end{figure}
The  next proposition is a local version on \cite[Proposition~17]{MartiniSwWe2001} (see also \cite[Theorem~1]{Holub1975}).
\begin{Prop}\label{pro2}
For all $z\in B_t^1$, the bisector part $B_t^1$ is contained in the double cone
\begin{equation*}
\setcond{z+\lambda (p-z)+\mu(q-z)}{\lambda, \mu\in\RR,\lambda\mu\geq 0}.
\end{equation*}
In particular, $B_t^1\subset \co(\setn{p,q,s_t})$.
\end{Prop}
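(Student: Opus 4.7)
I would follow the line of attack of Proposition~\ref{pro1}. By the central symmetry of $\bisec(p,q)$ about $\tfrac{1}{2}(p+q)$, it suffices to treat two points $z_1,z_2\in B_t^1$; by Proposition~\ref{pro1} they lie on distinct translates of $\strline{p}{q}$, so after relabeling I may assume $z_2$ sits on a translate strictly above the one through $z_1$. Writing $z_2=z_1+\lambda(p-z_1)+\mu(q-z_1)$, the goal is $\lambda\mu\geq 0$. Arguing by contradiction, I suppose $\lambda\mu<0$; the $p\leftrightarrow q$ symmetry of the claim lets me assume $\lambda<0$ and $\mu>0$. Comparing second coordinates (using that $\strline{p}{q}$ is horizontal) with $z_2$ lying strictly above $z_1$ then forces $\lambda+\mu<0$ as well.

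A direct affine calculation next shows that these sign conditions place the four points $p,q,z_2,z_1$ in convex position in that cyclic order, with the diagonals $[p,z_2]$ and $[z_1,q]$ crossing at an interior point and the opposite sides $[p,z_1]$ and $[z_2,q]$ disjoint. Applying \cite[Proposition~7]{MartiniSwWe2001} exactly as in the proof of Proposition~\ref{pro1} then yields
\[
\mnorm{z_2-p}+\mnorm{z_1-q}\geq\mnorm{z_1-p}+\mnorm{z_2-q},
\]
with equality if and only if $\bigl[\tfrac{z_2-p}{\mnorm{z_2-p}},\tfrac{z_1-q}{\mnorm{z_1-q}}\bigr]\subset\bd B$. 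The bisector property $\mnorm{z_i-p}=\mnorm{z_i-q}$ ($i=1,2$) makes both sides of the displayed inequality equal to $\mnorm{z_1-p}+\mnorm{z_2-p}$, so equality holds and the segment of unit vectors must lie on $\bd B$.

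The main obstacle is closing the argument from here---the step used but left implicit at the end of the proof of Proposition~\ref{pro1}. The intercept-theorem construction from the introduction identifies $\tfrac{z_2-p}{\mnorm{z_2-p}}$ with a point on the arc of $\bd B$ running from $\tfrac{q-p}{\mnorm{q-p}}$ to $t_p^\prime-p$, and $\tfrac{z_1-q}{\mnorm{z_1-q}}$ with a point on the symmetric arc from $\tfrac{p-q}{\mnorm{p-q}}$ to $t_q^\prime-q$; moreover, such an identification reaches the endpoint on the top flat only when $z_i=s_t$. Since $z_1\neq z_2$, at least one of these two unit vectors misses the top flat and thus lies strictly below the height of $L_{\text{top}}-p$. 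The two arcs sit on opposite sides of $\bd B$, separated along the boundary by the whole top flat $[t_p-p,t_p^\prime-p]$, so any flat edge of $\bd B$ containing points of both arcs must, by convexity of $B$ and maximality of the top flat, be contained in the top flat itself. That both unit vectors belong to the top flat is however excluded by the previous sentence, contradicting the inclusion of their segment in $\bd B$ and thereby completing the proof of $\lambda\mu\geq 0$.

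Finally, the inclusion $B_t^1\subset\co(\setn{p,q,s_t})$ is a direct consequence of the cone statement applied at $z=s_t\in B_t^1$: the intersection of the double cone $\setcond{s_t+\lambda(p-s_t)+\mu(q-s_t)}{\lambda\mu\geq 0}$ with the closed horizontal strip between $\strline{p}{q}$ and $G_t$, which contains $B_t^1$, is exactly the triangle $\co(\setn{p,q,s_t})$.
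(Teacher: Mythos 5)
Your argument is correct and follows essentially the same route as the paper's proof: assume a point of $B_t^1$ lies outside the double cone at another, apply the quadrilateral inequality \cite[Proposition~7]{MartiniSwWe2001}, use the bisector property to force equality and hence $\bigl[\tfrac{z_2-p}{\mnorm{z_2-p}},\tfrac{z_1-q}{\mnorm{z_1-q}}\bigr]\subset\bd(B)$, and contradict this. You are merely more explicit than the paper at the final step (the paper asserts without further justification that this segment cannot lie in $\bd(B)$) and in deriving the ``in particular'' clause, and both of these added details are sound.
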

\begin{proof}
Assume the converse statement. Then there is a point $w=z+\lambda (p-z)+\mu(q-z)\in B_t^1$ with $\lambda\mu<0$, say $\lambda <0$ and $\mu>0$; see Figure~\ref{fig:7}. From \cite[Proposition~7]{MartiniSwWe2001} it follows that
\begin{equation}
\mnorm{w-p}+\mnorm{z-q}\geq \mnorm{w-q}+\mnorm{z-p}, \label{eq:diagonals_vs_sides1}
\end{equation}
with equality if and only if $[\frac{w-p}{\mnorm{w-p}},\frac{z-q}{\mnorm{z-q}}]\subset\bd(B)$.
\begin{figure}[h!]
\begin{center}
\begin{tikzpicture}[line cap=round,line join=round,>=triangle 45,x=1.0cm,y=1.0cm]
\clip (-2.6,-0.3) rectangle(6.6,4);
\draw[name path=hex,shift={(-1.5,0)}] (-1,0)--(0,1)--(1,1)--(1,0)--(0,-1)--(-1,-1)--cycle;
\draw[name path=hexhex,shift={(1.5,0)}] (-1,0)--(0,1)--(1,1)--(1,0)--(0,-1)--(-1,-1)--cycle;
\fill [color=black] (-1.5,0) circle(1.5pt) node[left]{$p$};
\fill [color=black] (1.5,0) circle(1.5pt) node[right]{$q$};
\fill [color=black] (2.7,2.0) circle(1.5pt) node[above]{$w$};
\fill [color=black] (-0.7,2.4) circle(1.5pt) node[left]{$z$};
\draw [name path=strline0] (-1.5,0)--(2.7,2.0);
\draw[dotted] (-1.5,0)--(1.5,0);
\draw (-2,3)--(6,3)node[below]{$G_t$};
\draw[name path=strline3] ($(1.5,0)!1.7!(-0.7,2.4)$)--($(-0.7,2.4)!1.2!(1.5,0)$);
\draw ($(-1.5,0)!1.7!(-0.7,2.4)$)--($(-0.7,2.4)!1.2!(-1.5,0)$);
\draw[name path=strline1] ($(-0.5,1)!1.2!(-2.5,-1)$)--($(-2.5,-1)!3!(-0.5,1)$);
\draw[name path=strline2] ($(1.5,1)!1.2!(1.5,-1)$)--($(1.5,-1)!3!(1.5,1)$);
\draw ($(-1.5,1)!1.2!(-1.5,-1)$)--($(-1.5,-1)!3!(-1.5,1)$);
\draw ($(2.5,1)!1.2!(0.5,-1)$)--($(0.5,-1)!3!(2.5,1)$);
\fill[color=black,name intersections={of=strline1 and strline2}](intersection-1)circle(1.5pt)node[above left]{$s_t$};
\fill[color=black,name intersections={of=hex and strline0,name=schnitt}](schnitt-1)circle(1.5pt);
\fill[color=black,name intersections={of=hexhex and strline3,name=cut}](cut-1)circle(1.5pt);
\end{tikzpicture}
\end{center}
\caption{}\label{fig:7}
\end{figure}

Since $z,w\in\bisec(p,q)$, equation \eqref{eq:diagonals_vs_sides1} holds with equality, but $[\frac{w-p}{\mnorm{w-p}},\frac{z-q}{\mnorm{z-q}}]\centernot\subset\bd(B)$. Hence the assumption $w\in\bisec(p,q)$ is wrong.
\end{proof}

The next statement is immediately consequence of Theorem~\ref{thm:parallel}, Proposition~\ref{pro1}, and Proposition~\ref{pro2}.

\begin{Kor}
Let $\bisec(p, q)$ be the bisector of two different points $p$ and $q$. Then there exists a simple curve $B(p,q)\subseteq \bisec(p,q)$ through the midpoint of $p$ and $q$ and symmetric with respect to this midpoint which is homeomorphic to $\RR$ such that for every $x\in B(p,q)$ the curve $B(p,q)$ belongs to the double cone with apex $x$ and through $p$ and $q$. This curve separates the plane into two connected parts $B(p,q)^+$ and $B(p, q)^{-}$ such that whenever $y\in B(p, q)^+$, then $\mnorm{p-y}\leq\mnorm{q-y}$, and whenever $y\in B(p,q)^{-}$, then $\mnorm{p-y}\geq\mnorm{q-y}$. Moreover, if $x\in \bisec(p,q)$, then the curve $B(p,q)$ can be constructed to pass through $x$.
\end{Kor}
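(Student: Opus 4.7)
The plan is to handle the strict and non-strict cases separately. In the strict case, properties \ref{first}--\ref{last} of the introduction already give that $\bisec(p,q)$ itself is a simple curve homeomorphic to $\RR$ contained in the double cone through $p,q$ with apex at any of its own points; set $B(p,q)\defeq\bisec(p,q)$ and every assertion follows immediately, including the ``moreover'' clause.

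In the non-strict case, invoke the decomposition $\bisec(p,q)=B_1\cup B_2\cup B_3$ from Theorem~\ref{thm:parallel}. The midpoint $\frac{1}{2}(p+q)$ is the unique point of $\bisec(p,q)\cap\strline{p}{q}$ and therefore lies in $B_1$, while $B_2,B_3$ sit in the open half-planes above $G_t$ and below $G_b$. Pick any ray $r_t\subset B_2$ emanating from $s_t$, let $r_b\defeq(p+q)-r_t\subset B_3$ be its central image through the midpoint, and set
\begin{equation*}
B(p,q)\defeq r_b\cup B_1\cup r_t.
\end{equation*}
By Theorem~\ref{thm:parallel} $B_1$ is homeomorphic to $[0,1]$ with endpoints $s_b,s_t$; attaching a closed ray at each endpoint yields a properly embedded simple arc homeomorphic to $\RR$, symmetric about $\frac{1}{2}(p+q)$ by construction.

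The main obstacle is the double cone property, which I verify by case analysis on $x\in B(p,q)$. For $x\in B_1\setminus\{\frac{1}{2}(p+q)\}$ the inclusion $B_1\subset D(x)$ (where $D(x)$ denotes the double cone with apex $x$ through $p,q$) follows from Proposition~\ref{pro2} combined with its reflection through the midpoint, plus one further application of the same convex-quadrilateral inequality to cover the case when the two bisector points lie on opposite sides of $\strline{p}{q}$; the extension $r_t\cup r_b\subset D(x)$ then comes from $s_t,s_b\in B_1\subset D(x)$ together with the check that the generator directions of $B_2,B_3$ at $s_t,s_b$ (namely $s_t-p,s_t-q$ and $s_b-p,s_b-q$) lie in the appropriate component of $D(x)-x$. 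For $x\in r_t$ the vectors $p-x,q-x$ span a convex cone whose positive hull contains $\{p,q,s_t,s_b\}$ and thus $\co(\{p,q,s_t,s_b\})\supset B_1\cup r_b$, while points of $r_t$ beyond $x$ lie in the opposite cone by collinearity; the case $x\in r_b$ is symmetric.

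The separation and signed-distance assertion follow from a Jordan-arc argument: the unbounded simple arc $B(p,q)$ splits $\RR^2$ into two open connected regions $B(p,q)^\pm$, and labelling $B(p,q)^+$ as the one containing $p$, the continuous function $y\mapsto\mnorm{p-y}-\mnorm{q-y}$ is negative on the $p$-side of $\RR^2\setminus\bisec(p,q)$ and vanishes on $(B_2\cup B_3)\cap B(p,q)^+$; this yields $\mnorm{p-y}\leq\mnorm{q-y}$ on $B(p,q)^+$, with the opposite inequality on $B(p,q)^-$ by symmetry. Finally, for the ``moreover'' clause, if $x\in B_2\setminus\{s_t\}$ take $r_t$ as the ray from $s_t$ through $x$ (which stays in $B_2$ by convexity of the cone); the case $x\in B_3\setminus\{s_b\}$ is analogous; if $x\in B_1$ any choice of $r_t$ works.
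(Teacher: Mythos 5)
The paper offers no argument for this corollary beyond declaring it an immediate consequence of Theorem~\ref{thm:parallel} and Propositions~\ref{pro1} and~\ref{pro2}, so you are supplying details the authors omit. Your construction $B(p,q)=r_b\cup B_1\cup r_t$, with $r_t$ an arbitrary ray of the cone $B_2$ emanating from $s_t$ and $r_b$ its image under reflection in $\frac{1}{2}(p+q)$, is exactly the intended one; the strict case, the homeomorphism type, the reduction of the double-cone property for $x\in B_1$ to Proposition~\ref{pro2} plus its reflection plus one more application of the quadrilateral inequality, and the ``moreover'' clause are all handled correctly. Two points need repair, however. First, for $x\in r_t$ you assert $B_1\cup r_b\subset\co(\setn{p,q,s_t,s_b})$, which cannot hold because $r_b$ is unbounded. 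What is true is that the convex cone $K\defeq x+\setcond{\lambda(p-x)+\mu(q-x)}{\lambda,\mu\geq0}$ contains $p$, $q$, $s_t$, $s_b$ (hence $B_1$) and also the direction $-d$ of $r_b$: writing $d=\alpha(s_t-p)+\beta(s_t-q)$ with $\alpha,\beta\geq0$ and $x=s_t+td$, one computes $-d=(1+t(\alpha+\beta))^{-1}\lr{\alpha(p-x)+\beta(q-x)}$, so $K$ contains all of $r_b$.

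Second, the separation argument is not closed as written. Knowing that $g(y)\defeq\mnorm{p-y}-\mnorm{q-y}$ is negative on the $p$-side of $\RR^2\setminus\bisec(p,q)$ and vanishes on $\bisec(p,q)\cap B(p,q)^+$ does not exclude a point $y\in B(p,q)^+$ with $g(y)>0$: a path from $y$ to $p$ inside $B(p,q)^+$ may cross the zero set of $g$ at points of $(B_2\cup B_3)\setminus B(p,q)$, so the intermediate value theorem yields no contradiction. You need one further input. Either observe that $\setcond{y}{g(y)>0}$ is connected --- it is one of the two components of $\RR^2\setminus\bisec(p,q)$, since by the shape established in Theorem~\ref{thm:parallel} that complement has exactly two components and the sets $\setcond{y}{g(y)<0}$, $\setcond{y}{g(y)>0}$ partition it into nonempty open pieces --- and that it contains $q$, which lies on the opposite side of $B(p,q)$ from $p$ because $[p,q]$ meets $B(p,q)$ only at the midpoint. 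Or argue line by line: every translate $G$ of $\strline{p}{q}$ meets $B(p,q)$ in exactly one point (Proposition~\ref{pro1} and its reflection for the lines between $G_t$ and its reflected counterpart; above $G_t$ every ray of $B_2$ issuing from $s_t$ has ascending direction and so meets $G$ once), while $G\cap\setcond{y}{g(y)=0}$ is a point or a segment containing that point, with $g<0$ on the part of $G$ to its left and $g>0$ to its right by continuity and nonvanishing there. Both repairs use only material already in the paper, so your overall approach stands.
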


For the strictly convex case in spaces of higher dimensions these properties of the bisector can be found in \cite[Lemma~1, Theorem~2]{Horvath2000}.

\begin{Bem}
Recall the properties \ref{first}--\ref{last} of a bisector of a strict pair, as mentioned in the Introduction. V{\"a}is{\"a}l{\"a} \cite[Theorem~2.4]{Vaeisaelae2013} uses these properties for representing the bisector as the graph of a Lipschitz-continuous function $\RR\to\RR$. A slight modification of the corresponding proof can be used to show that, in the non-strict case, $B_1$ can be viewed as the graph of a Lipschitz-continuous function $[0,1]\to \RR$. We give a brief explanation thereof. First of all, we have a look at the strict case and denote the uppermost points of $p+B$ and $q+B$ by $t_p$ and $t_q$, respectively. The fundament of this V{\"a}is{\"a}l{\"a}'s theorem is \cite[Lemma~2.3]{Vaeisaelae2013}, where the following statement can be found (adapted to our language). \emph{If $(p,q)$ is a strict pair, $z\in\bisec(p,q)$ lies on or above the straight line $\strline{p}{q}$, and $z^\prime$ lies in the component of $\bisec(p,q)\setminus\setn{z}$ above $z$, then $z^\prime$ lies above $\clray{z}{t_p}\cup\clray{z}{t_q}$.}

Now the transition to the non-strict case is easier. If we apply the denotation introduced at the beginning of this section, the following statement for the non-strict case can be proved similarly to \cite[Lemma~2.3]{Vaeisaelae2013}. \emph{If $(p,q)$ is a non-strict pair, $z\in B_1$, and $z^\prime$ lies in the component of $B_1\setminus\setn{z}$ above $z$, then $z^\prime$ lies above $\clray{z}{t_p}\cup\clray{z}{t_q}$.}
\end{Bem}

\providecommand{\bysame}{\leavevmode\hbox to3em{\hrulefill}\thinspace}
\vspace{0.3cm}
\begin{tabular}{l}
Thomas Jahn\\
Fakult\"at f\"ur Mathematik, TU Chemnitz\\
D-09107 Chemnitz, GERMANY\vspace{0.1cm}\\
E-mail: thomas.jahn\raisebox{-1.5pt}{@}mathematik.tu-chemnitz.de
\end{tabular}\vspace{0.3cm}

\begin{tabular}{l}
Margarita Spirova\\
Fakult\"at f\"ur Mathematik, TU Chemnitz\\
D-09107 Chemnitz, GERMANY\vspace{0.1cm}\\
E-mail: margarita.spirova\raisebox{-1.5pt}{@}mathematik.tu-chemnitz.de
\end{tabular}

\begin{thebibliography}{10}
\bibitem{Aurenhammer1991}
F.~Aurenhammer, \emph{{V}oronoi diagrams -- a survey of a fundamental geometric
  data structure}, ACM Comput. Surv. \textbf{23} (1991), no.~3, pp.~345--405,
  \href{http://dx.doi.org/10.1145/116873.116880}{doi:~10.1145/116873.116880}.

\bibitem{AurenhammerKl2000}
F.~Aurenhammer and R.~Klein, \emph{{V}oronoi diagrams}, {H}andbook of
  {C}omputational {G}eometry, North-Holland, Amsterdam, 2000, pp.~201--290,
  \href{http://dx.doi.org/10.1016/B978-044482537-7/50006-1}{doi:~10.1016/B978-044482537-7/50006-1}.

\bibitem{Busemann1941}
H.~Busemann, \emph{{O}n {L}eibniz's definition of planes}, Amer. J. Math
  \textbf{63} (1941), pp.~101--111,
  \href{http://dx.doi.org/10.2307/2371280}{doi:~10.2307/2371280}.

\bibitem{Busemann1944}
\bysame, \emph{{L}ocal metric geometry}, Trans. Amer. Math. Soc. \textbf{56}
  (1944), pp.~200--274,
  \href{http://dx.doi.org/10.2307/1990249}{doi:~10.2307/1990249}.

\bibitem{ChewDr1985}
L.~P. Chew and R.~L.~S. Drysdale, \emph{{V}oronoi diagrams based on convex
  distance functions}, Proceedings of the {F}irst {A}nnual {S}ymposium on
  {C}omputational {G}eometry (J.~O'Rourke, ed.), ACM, 1985, pp.~235--244,
  \href{http://dx.doi.org/10.1145/323233.323264}{doi:~10.1145/323233.323264}.

\bibitem{CorbalanMaRe1996}
A.~G. Corbalan, M.~Maz{\'{o}}n, and T.~Recio, \emph{{G}eometry of bisectors for
  strictly convex distances}, Internat. J. Comput. Geom. Appl. \textbf{6}
  (1996), no.~1, pp.~45--58,
  \href{http://dx.doi.org/10.1142/S0218195996000046}{doi:~10.1142/S0218195996000046}.

\bibitem{CorbalanMaReSa1993}
A.~G. Corbalan, M.~Maz{\'{o}}n, T.~Recio, and F.~Santos, \emph{{O}n the
  topological shape of planar {V}oronoi diagrams}, {P}roceedings of the {N}inth
  {A}nnual {S}ymposium on {C}omputational {G}eometry, ACM, 1993, pp.~109--115,
  \href{http://dx.doi.org/10.1145/160985.161006}{doi:~10.1145/160985.161006}.

\bibitem{HeMaWu2013}
C.~He, H.~Martini, and S.~Wu, \emph{{O}n bisectors for convex distance
  functions}, Extracta Math. \textbf{28} (2013), no.~1, pp.~57--76.

\bibitem{Holub1975}
J.~R. Holub, \emph{{R}otundity, orthogonality, and characterizations of inner
  product spaces}, Bull. Amer. Math. Soc. \textbf{81} (1975), no.~6,
  pp.~1087--1089,
  \href{http://dx.doi.org/10.1090/S0002-9904-1975-13927-9}{doi:~10.1090/S0002-9904-1975-13927-9}.

\bibitem{Horvath2000}
{\'{A}}.~G. Horv{\'{a}}th, \emph{{O}n bisectors in {M}inkowski normed spaces},
  Acta Math. Hungar. \textbf{89} (2000), no.~3, pp.~233--246,
  \href{http://dx.doi.org/10.1023/A:1010611925838}{doi:~10.1023/A:1010611925838}.

\bibitem{Horvath2004}
\bysame, \emph{{B}isectors in {M}inkowski {$3$}-space}, Beitr. Algebra Geom.
  \textbf{45} (2004), no.~1, pp.~225--238.

\bibitem{HorvathMa2013}
{\'{A}}.~G. Horv{\'{a}}th and H.~Martini, \emph{{B}ounded representation and
  radial projections of bisectors in normed spaces}, Rocky Mountain J. Math.
  \textbf{43} (2013), no.~1, pp.~179--191,
  \href{http://dx.doi.org/10.1216/RMJ-2013-43-1-179}{doi:~10.1216/RMJ-2013-43-1-179}.

\bibitem{Klein1989}
R.~Klein, \emph{{C}oncrete and abstract {V}oronoi diagrams}, {L}ecture {N}otes
  in {C}omputer {S}cience, vol. 400, Springer-Verlag, Berlin, 1989,
  \href{http://dx.doi.org/10.1007/3-540-52055-4}{doi:~10.1007/3-540-52055-4}.

\bibitem{Ma2000}
L.~Ma, \emph{{B}isectors and {V}oronoi {D}iagrams for {C}onvex {D}istance
  {F}unctions}, Ph.D. thesis, Fernuniversit{\"{a}}t Hagen, 2000.

\bibitem{Mann1935}
H.~Mann, \emph{{U}ntersuchungen {\"{u}}ber {W}abenzellen bei allgemeiner
  {M}inkowskischer {M}etrik}, Monatsh. Math. Phys. \textbf{42} (1935), no.~1,
  pp.~417--424,
  \href{http://dx.doi.org/10.1007/BF01733304}{doi:~10.1007/BF01733304}.

\bibitem{MartiniSw2004}
H.~Martini and K.~Swanepoel, \emph{{T}he geometry of {M}inkowski spaces -- a
  survey, {P}art {II}}, Expo. Math. \textbf{22} (2004), no.~2, pp.~93--144,
  \href{http://dx.doi.org/10.1016/S0723-0869(04)80009-4}{doi:~10.1016/S0723-0869(04)80009-4}.

\bibitem{MartiniSwWe2001}
H.~Martini, K.~Swanepoel, and G.~Wei{\ss}, \emph{{T}he geometry of {M}inkowski
  spaces -- a survey, {P}art {I}}, Expo. Math. \textbf{19} (2001), no.~2,
  pp.~97--142,
  \href{http://dx.doi.org/10.1016/S0723-0869(01)80025-6}{doi:~10.1016/S0723-0869(01)80025-6}.

\bibitem{MartiniSwWe2002}
\bysame, \emph{{T}he {F}ermat--{T}orricelli problem in normed planes and
  spaces}, J. Optim. Theory Appl. \textbf{115} (2002), no.~2, pp.~283--314,
  \href{http://dx.doi.org/10.1023/A:1020884004689}{doi:~10.1023/A:1020884004689}.

\bibitem{Vaeisaelae2013}
J.~V{\"{a}}is{\"{a}}l{\"{a}}, \emph{{S}lopes of bisectors in normed planes},
  Beitr. Algebra Geom. \textbf{54} (2013), no.~1, pp.~225--235,
  \href{http://dx.doi.org/10.1007/s13366-012-0106-6}{doi:~10.1007/s13366-012-0106-6}.

\end{thebibliography}
\end{document}